\renewcommand*\subjclass[2][2000]{%
  \def\@subjclass{#2}%
  \@ifundefined{subjclassname@#1}{%
    \ClassWarning{\@classname}{Unknown edition (#1) of Mathematics
      Subject Classification; using '1991'.}%
  }{%
    \@xp\let\@xp\subjclassname\csname subjclassname@#1\endcsname
  }%
}
\newtheorem{theorem}{Theorem}[section]
\newtheorem{lemma}[theorem]{Lemma}
\newtheorem*{lemma*}{Lemma}
\newtheorem{proposition}[theorem]{Proposition}
\newtheorem{corollary}[theorem]{Corollary}
\theoremstyle{definition}
\theoremstyle{remark}
\newtheorem{remark}[theorem]{Remark}
\numberwithin{equation}{section}
\def\XXint#1#2#3{{\setbox0=\hbox{$#1{#2#3}{\int}$}
\vcenter{\hbox{$#2#3$}}\kern-.5\wd0}}
\def\le{\leqslant}
\def\ge{\geqslant}
\begin{document}

\title{Schwarz lemma for holomorphic mappings in the unit ball}

\author{David Kalaj}
\address{University of Montenegro, Faculty of Natural Sciences and
Mathematics, Cetinjski put b.b. 81000 Podgorica, Montenegro}
\email{davidkalaj@gmail.com}

 \subjclass{Primary 32H04}
\keywords{Holomorphic mappings, Schwarz inequality}

\maketitle

\makeatletter\def\thefootnote{\@arabic\c@footnote}\makeatother

\begin{abstract}
In this note we establish a Schwarz Pick type inequality for holomorphic mappings between unit balls $\mathbf{B}_n$ and $\mathbf{B}_m$ in corresponding complex spaces. We also prove a Schwarz Pick type inequality for pluri-harmonic functions.
\end{abstract}

\maketitle

\section{Introduction}\label{intsec}

The  Fr\'echet derivative of a holomorphic mapping $f:\Omega\to \mathbf{C}^m$, where $\Omega\subset \mathbf{C}^n$,  is defined to be the unique linear map $A=f'(z):\mathbf{C}^n\to \mathbf{C}^m$ such that $f(z+h)=f(z)+ f'(z) h +O(|h|^2)$. The norm of such a map is defined by $$\|A\|=\sup_{|z|=1} |Az|.$$

The classical Schwarz lemma states that $|f(z)|\le |z|$ for every holomorphic mapping of the unit disk $\mathbf{B}_1\subset \mathbf{C}$ into itself satisfying the condition $f(0)=0$. This inequality implies the following inequality for the derivative \begin{equation}\label{as}|g'(z)|\le \frac{1-|g(z)|^2}{1-|z|^2}\end{equation} for every holomorphic mapping $g$ of the unit disk into itself. On the other hand if $n,m$ are two positive integers and $\mathbf{B}_n\subset \mathbf{C}^n$ is the unit ball, then every holomorphic mapping $f:\mathbf{B}_n\to \mathbf{B}_m$, with $f(0)=0$ satisfies the inequality $|f(z)|\le |z|$, but the counterpart of \eqref{as} in the space does not hold provided that $m\ge 2$ (see e.g. \cite{Pavlovic} and corresponding sharp inequality \eqref{ejte} below). However it holds for $m=1$, while  for $m\ge 2$ it holds its weaker form namely $1-|g(z)|^2$  should be replaced by $\sqrt{1-|g(z)|^2}$. This is proved in Theorem~\ref{theo1}, which is the main result of this paper. By using the case $m=1$, in Theorem~\ref{te2} we prove Schwarz Pick inequality type inequality for pluriharmonic function, which extends a corresponding result for real harmonic functions \cite{kalvuo}

\subsection{Automorphisms of the unit ball}
Let $P_a$ be the
orthogonal projection of $\mathbf{C}^n$ onto the subspace $[a]$ generated by $a$, and let $$Q=Q_a =
I - P_a$$ be the projection onto the orthogonal complement of $[a]$. To be
quite explicit, $P_0 = 0$ and  $P=P_a(z) =\frac{\left<z,a\right> a}{\left<a, a\right>}$. Put $s_a = (1 - |a|^2)^{1/2}$ and define
$$ \varphi_a(z) =\frac{a-P_a z-s_a Q_a z}{1-\left<z,a\right>}.$$

If $\Omega= \{z\in \mathbf{C}^n:\left<z,a\right>\neq 1\}$,  then $\varphi_a$  is holomorphic in $\Omega$. It is clear that $\overline{\mathbf{B}}\subset \Omega$ for $|a| < 1.$

\begin{proposition}\cite[Theorem~2.2.2]{rudin}
If $\varphi_a$ is defined as above then

a) $\varphi_a(0)=a$ and $\varphi_a(a)=0$.

b) $\varphi_a'(0)=-s^2 P - s Q.$

c) $\varphi_a'(a)=-\frac{1}{s^2}P-\frac{1}{s}Q.$

d) $\varphi_a$ is an involution: $\varphi_a(\varphi_a(z))=z.$

e) $\varphi_a$ is a biholomorphism of the closed unit ball onto itself.

\end{proposition}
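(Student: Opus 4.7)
My plan is to verify the five items essentially by direct computation, exploiting the orthogonal decomposition $z = P_a z + Q_a z$ with $\langle Q_a z, a\rangle = 0$ and $\langle P_a z, a\rangle = \langle z, a\rangle$.

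For (a), I would just substitute. At $z=0$ both $P_a z$ and $Q_a z$ vanish, and $\langle 0, a\rangle = 0$, so $\varphi_a(0) = a/1 = a$. At $z=a$ we have $P_a a = a$, $Q_a a = 0$, and $1-\langle a,a\rangle = s^2 \neq 0$, so the numerator vanishes and $\varphi_a(a)=0$.

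For (b) and (c), I would write $\varphi_a = N/D$ with $N(z) = a - P_a z - s Q_a z$ and $D(z) = 1 - \langle z, a\rangle$, so $N'(z) h = -P_a h - s Q_a h$ and $D'(z) h = -\langle h,a\rangle$. The quotient rule gives
\[
\varphi_a'(z) h = \frac{(-P_a h - s Q_a h)\, D(z) + N(z)\, \langle h, a\rangle}{D(z)^2}.
\]
At $z=a$ the numerator $N(a)=0$ and $D(a) = s^2$, which yields $\varphi_a'(a)h = -\tfrac{1}{s^2} P_a h - \tfrac{1}{s} Q_a h$. At $z=0$ we have $D(0)=1$ and the extra term becomes $\langle h, a\rangle a = |a|^2 P_a h$; combined with $-P_a h$ this gives $(|a|^2-1) P_a h = -s^2 P_a h$, producing the stated formula.

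For (d), the honest work is checking $\varphi_a(\varphi_a(z)) = z$. Let $t = \langle z, a\rangle$ and $w = \varphi_a(z)$. Since $Q_a z \perp a$, I get $\langle w, a\rangle = (|a|^2 - t)/(1-t)$, hence $1 - \langle w, a\rangle = s^2/(1-t)$; this is the clean identity that drives the computation. Expanding $P_a w$ using $\langle w,a\rangle$ and writing $s Q_a w = s w - s P_a w$, the numerator $a - P_a w - s Q_a w$ splits into a multiple of $a$ and a multiple of $Q_a z$; after cancellation the $a$-coefficient simplifies (the key algebraic step where I expect the arithmetic to be densest) to $t s^2 /[|a|^2 (1-t)]$, so the $a$-part becomes $\frac{s^2}{1-t} P_a z$ while the $Q_a z$-part is obviously $\frac{s^2}{1-t} Q_a z$. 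Summing gives $\frac{s^2}{1-t} z$, and dividing by the denominator $s^2/(1-t)$ yields $z$.

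For (e), the standard trick is to establish the norm identity
\[
1 - |\varphi_a(z)|^2 = \frac{(1-|a|^2)(1-|z|^2)}{|1-\langle z,a\rangle|^2},
\]
which follows by expanding $|\varphi_a(z)|^2 = |P_a \varphi_a(z)|^2 + |Q_a \varphi_a(z)|^2$ via $|P_a v|^2 = |\langle v,a\rangle|^2/|a|^2$ and the Pythagorean decomposition. The identity shows $\varphi_a$ maps $\mathbf B_n$ into itself and the unit sphere to itself; combined with the involution property (d) it is a holomorphic bijection of $\overline{\mathbf B}_n$ with holomorphic inverse $\varphi_a$. The main technical obstacle in the whole proposition is the algebra in (d), and the cleanest way to keep it under control is to work consistently with the $P_a$/$Q_a$ decomposition and the scalar $t=\langle z,a\rangle$.
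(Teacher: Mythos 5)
Your proposal is correct, and there is nothing in the paper to compare it against: the paper does not prove this proposition but quotes it verbatim from Rudin \cite[Theorem~2.2.2]{rudin}, and your computation (the quotient rule for (b)--(c), the identity $1-\langle \varphi_a(z),a\rangle = s^2/(1-\langle z,a\rangle)$ driving (d), and the norm identity $1-|\varphi_a(z)|^2 = (1-|a|^2)(1-|z|^2)/|1-\langle z,a\rangle|^2$ for (e)) is exactly the standard argument found there. The only detail worth adding is that your step (d) divides by $|a|^2$, so the degenerate case $a=0$, where $P_0=0$, $s=1$ and $\varphi_0=-\mathrm{id}$, should be noted separately (it is trivial).
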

\section{The main results}
\begin{theorem}[The main theorem]\label{theo1}
If $f$ is a holomorphic mapping of the unit ball $\mathbf{B}_n\subset\mathbf{C}^n$ into $\mathbf{B}_m\subset \mathbf{C}^m$, then for $m\ge 2$
\begin{equation}\label{merkur}\| f'(z)\|\le \frac{{\sqrt{1-|f(z)|^2}}}{1-|z|^2}, \ \ z\in \mathbf{B}_n\end{equation} and for $m=1$ we have that \begin{equation}\label{merkur1}\| f'(z)\|\le \frac{1-|f(z)|^2}{1-|z|^2}, \ \ z\in \mathbf{B}_n.\end{equation} The inequalities \eqref{merkur} and \eqref{merkur1} are sharp. In particular we have the following sharp inequalities
 \begin{equation}\label{ejte}\| f'(0)\|\le
\left\{
  \begin{array}{ll}
     \sqrt{1-|f(0)|^2}, & \hbox{if $m\ge 2$;} \\
   1-|f(0)|^2 , & \hbox{if $m=1$.}
  \end{array}
\right.
\end{equation}
\end{theorem}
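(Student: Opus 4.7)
The plan is to imitate the standard derivation of the Schwarz--Pick lemma in the disk, but carried out with the ball automorphisms of Proposition 1.1 and with careful bookkeeping of operator norms, because $\varphi_a'(0)$ and $\varphi_a'(a)$ have \emph{two} distinct singular values (coming from $P_a$ and $Q_a$), and the inequality depends on which of them dominates.

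Fix $z\in\mathbf{B}_n$ and set $b=f(z)$. Define
\[
g=\varphi_b\circ f\circ\varphi_z.
\]
Since $\varphi_z(0)=z$ and $\varphi_b(b)=0$, we have $g:\mathbf{B}_n\to\mathbf{B}_m$ with $g(0)=0$, so by the classical (unball-valued) Schwarz lemma $|g(w)|\le|w|$, and differentiating at $0$ this gives $\|g'(0)\|\le 1$. Because $\varphi_z$ and $\varphi_b$ are involutions we may write $f=\varphi_b\circ g\circ\varphi_z$, and the chain rule yields
\[
f'(z)=\varphi_b'(g(0))\cdot g'(0)\cdot\varphi_z'(z)=\varphi_b'(0)\cdot g'(0)\cdot\varphi_z'(z).
\]
Submultiplicativity of the operator norm then gives
\[
\|f'(z)\|\le\|\varphi_b'(0)\|\cdot\|g'(0)\|\cdot\|\varphi_z'(z)\|\le\|\varphi_b'(0)\|\cdot\|\varphi_z'(z)\|,
\]
so the whole problem reduces to computing the norms of the two linear maps from parts (b) and (c) of Proposition 1.1.

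The main (and only genuine) subtlety is the computation of these norms, and this is exactly where the dichotomy $m=1$ vs $m\ge 2$ enters. From (c), $\varphi_z'(z)=-\tfrac{1}{s_z^2}P_z-\tfrac{1}{s_z}Q_z$ has singular values $1/s_z^2$ on $[z]$ and $1/s_z$ on $[z]^\perp$; since $0<s_z<1$, the larger one wins and $\|\varphi_z'(z)\|=1/s_z^2=1/(1-|z|^2)$. On the other hand, by (b), $\varphi_b'(0)=-s_b^2 P_b-s_b Q_b$ has singular values $s_b^2$ on $[b]$ and $s_b$ on $[b]^\perp$. If $m\ge 2$ the orthogonal complement $[b]^\perp$ inside $\mathbf{C}^m$ is nontrivial, so the larger singular value $s_b=\sqrt{1-|b|^2}$ is attained and $\|\varphi_b'(0)\|=\sqrt{1-|f(z)|^2}$, yielding \eqref{merkur}. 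If $m=1$ then $Q_b=0$ and $\varphi_b'(0)=-s_b^2 I$, so $\|\varphi_b'(0)\|=1-|f(z)|^2$, which gives \eqref{merkur1}.

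For sharpness, at a chosen point $b\in\mathbf{B}_m$ with $b\neq 0$ one rotates coordinates so that $b$ is a multiple of $e_1$; the map $F(w_1,\ldots,w_n)=(0,w_1,0,\ldots,0)\in\mathbf{C}^m$ (valid since $m\ge 2$) sends $\mathbf{B}_n$ into $\mathbf{B}_m$ with $F(0)=0$ and $F'(0)e_1=e_2\in[b]^\perp$, so $f=\varphi_b\circ F$ satisfies $f(0)=b$ and $\|f'(0)\|=s_b=\sqrt{1-|f(0)|^2}$, matching \eqref{ejte} for $m\ge 2$; for $m=1$ the classical extremal $f(w_1,\ldots,w_n)=\varphi_b(w_1)$ gives equality in \eqref{merkur1}. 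Sharpness at an arbitrary $z$ follows by conjugating these extremals with $\varphi_z$. The only thing to be wary of in writing this up cleanly is making sure the operator-norm identifications are justified (i.e.\ that the supremum over the unit sphere really is attained at the vectors picking out the claimed singular value), which is immediate from the orthogonal decomposition $\mathbf{C}^n=[z]\oplus[z]^\perp$ and $\mathbf{C}^m=[b]\oplus[b]^\perp$.
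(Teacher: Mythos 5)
Your proposal is correct and follows essentially the same route as the paper: conjugate by the involutive automorphisms $\varphi_z$ and $\varphi_b$, apply the Schwarz lemma to get $\|g'(0)\|\le 1$, and reduce everything to the operator norms of $\varphi_b'(0)$ and $\varphi_z'(z)$, which the paper isolates in Lemma~\ref{lemafor} and you compute equivalently via the singular values on $[b]\oplus[b]^\perp$. The only cosmetic difference is the extremal example (the paper uses $f_t(z,w)=(z\sin t,\cos t)$ rather than $\varphi_b\circ F$), and both verify sharpness.
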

We need the following lemma
\begin{lemma}\label{lemafor} 
Let  $n$ be a positive integer and let $M=M_a=-s^2 P_a - s Q_a$ and $N=N_a = -\frac{1}{s^2}P_a-\frac{1}{s}Q_a $, where $s=\sqrt{1-|a|^2}$.
Then $$\|M\|=\left\{
               \begin{array}{ll}
                 \sqrt{1-|a|^2}, & \hbox{if $n\ge 2$;} \\
                 1-|a|^2, & \hbox{if $n=1$}
               \end{array}
             \right.
$$ and

$$\|N\|=\frac{1}{1-|a|^2}.$$

\end{lemma}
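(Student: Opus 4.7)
The plan is to exploit the orthogonal decomposition of $\mathbf{C}^n$ into the line $[a]$ and its orthogonal complement, on which $M_a$ and $N_a$ act as scalar multiples of the identity.

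First I would write every $z\in\mathbf{C}^n$ as $z = P_a z + Q_a z$ with $\inn{P_a z, Q_a z}=0$, so that
\[
|z|^2 = |P_a z|^2 + |Q_a z|^2.
\]
Applying $M_a$ gives $M_a z = -s^2 P_a z - s Q_a z$, and, since $P_a z \perp Q_a z$,
\[
|M_a z|^2 = s^4 |P_a z|^2 + s^2 |Q_a z|^2.
\]
An analogous formula holds for $N_a$ with $s^{-4}, s^{-2}$ in place of $s^4, s^2$.

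To compute $\|M_a\|$ I would set $t=|P_a z|^2\in[0,1]$ on the unit sphere, so that $|M_a z|^2 = s^4 t + s^2(1-t)$, a linear function of $t$. If $n\ge 2$ then $[a]^\perp$ has positive dimension, so every value of $t\in[0,1]$ is attained by a unit vector. Since $s\le 1$ we have $s^2\ge s^4$, so the maximum is at $t=0$ (i.e.\ $z\in [a]^\perp$), giving $\|M_a\|^2=s^2$, hence $\|M_a\|=\sqrt{1-|a|^2}$. If $n=1$ then $Q_a=0$ and $P_a=I$, forcing $t=1$, so $\|M_a\|^2=s^4$ and $\|M_a\|=1-|a|^2$. (The case $a=0$ is harmless: $M_0=-I$ has norm $1$, matching $\sqrt{1-|a|^2}=1$.)

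For $N_a$ the same calculation gives $|N_a z|^2 = s^{-4} t + s^{-2}(1-t)$; now $s^{-4}\ge s^{-2}$, so the maximum is at $t=1$, which is attainable for every $n\ge 1$ (choose $z = a/|a|$ when $a\ne 0$; when $a=0$ one has $N_0=-I$). Therefore $\|N_a\|=1/s^2=1/(1-|a|^2)$ in all cases. The only subtle point is that the dichotomy in $\|M_a\|$ is genuinely dimensional: when $n=1$ we are forced onto the line $[a]$ where $M_a$ acts by $-s^2$, whereas for $n\ge 2$ we can escape to $[a]^\perp$ where $M_a$ acts by the larger scalar $-s$; I would make this explicit to justify the two cases in the statement.
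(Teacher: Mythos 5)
Your proof is correct and follows essentially the same route as the paper: both rest on the orthogonal (Pythagorean) decomposition $|z|^2=|P_az|^2+|Q_az|^2$, observe that $M_a$ and $N_a$ act by the scalars $-s^2,-s$ (resp.\ $-s^{-2},-s^{-1}$) on $[a]$ and $[a]^\perp$, and identify the extremal directions $z\perp a$ for $M_a$ and $z\in[a]$ for $N_a$, with the $n=1$ dichotomy coming from the vanishing of $Q_a$. Your parametrization by $t=|P_az|^2$ and the explicit treatment of $a=0$ are slightly tidier than the paper's direct expansion, but the argument is the same.
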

\begin{proof}[Proof of Lemma~\ref{lemafor}]
We have $$s^2 Pz + s Q z =sz + (s^2-s)\frac{\left<z,a\right> a}{\left<a, a\right>}.$$ So
\[\begin{split}|s^2 Pz + s Q z|^2&=\left|sz + (s^2-s)\frac{\left<z,a\right> a}{\left<a, a\right>}\right|^2\\&=s^2|z|^2+(s^4-s^2)\frac{\left<z,a\right>^2}{\left<a,a\right>}\le s^2 |z|^2.\end{split}\]
For $z\bot a$ the previous inequality becomes an equality. It follows that  $\|M\|=\sqrt{1-|a|^2}.$ The case $n=1$ is trivial and in this case $Q_a\equiv 0$.
Further, we establish the norm of the operator $$N=N_a = -\frac{1}{s^2}P-\frac{1}{s}Q.$$ We have $$Nz= -\frac{1}{s^2} P z-\frac{1}{s} Q z= \left(-\frac{1}{s^2}+\frac{1}{s}\right) \frac{\left<z,a\right> a}{\left<a, a\right>}-\frac{1}{s} z$$
and so
$$|N z|^2 =\frac{1}{s^2}|z|^2+\left(\frac{1}{s^4}-\frac{1}{s^2}\right)\frac{\left<z,a\right>^2}{|a|^2}.$$ By choosing $z=a/|a|$, we obtain  that $\|N\|= \frac{1}{1-|a|^2}.$
\end{proof}

\begin{proof}[Proof of Theorem~\ref{theo1}]
Let $\varphi_a$ be an involutive automorphism of the unit ball $\mathbf{B}_n$ onto itself such that $\varphi_a(0)=a$ and let $b=f(a)$. Let $\varphi_b$ be an  involutive automorphism of the unit ball $\mathbf{B}_m$ onto itself such that $\varphi_b(b)=0$ and let $g=\varphi_b^{-1}\circ f \circ \varphi_a^{-1}$. Then $f(z) =\varphi_b \circ g \circ \varphi_a$ and so $$f'(a) = \varphi_b'(0) g'(0) \varphi_a'(a)=M_b g'(0) N_a.$$ Since $g$ maps the unit ball into itself and satisfies  $g(0)=0$, by \cite[Theorem~8.1.2]{rudin}, it follows that $|g'(0)|\le 1$. Since $\|A\cdot B\|\le \|A\| \|B\|$, according to Lemma~\ref{lemafor} we obtain \eqref{merkur}.
The sharpness is proved by the following example.
Let   $t\in(0,\pi/2)$ and define $f_t(z,w) = (z\sin t  ,\cos t  )$. Then $f_t:\mathbf{B}_2\to \mathbf{B}_2$. Moreover $|f_t'(0)|= \sin t$ and $|f_t(0)|=\cos t.$ So $| f_t'(0)|=\sqrt{1-|f_t(0)|^2}.$
\end{proof}
Assume that $m=1$ and let $a$ be holomorphic function of the unit ball $\mathbf{B}_n$ into $\mathbf{C}^m=\mathbf{C}$. 
Since $a'$ is $\mathbf{C}$ linear, we regard $a'(z)$  as a vector $(a_{z_1},\dots a_{z_n})$ from the space $\mathbf{C}^n$ and we will denote it by $\nabla a$.
\begin{theorem}\label{te2}
Let $f$ be a pluriharmonic function of the unit ball $\mathbf{B}_n$ into $(-1,1)$.
Then  the following sharp inequality holds
\begin{equation}\label{sha}|\nabla f(z)|\le
\frac{4}{\pi}\frac{1-|f(z)|^2}{1-|z|^2},\ \ \ |z|<1.\end{equation}
\end{theorem}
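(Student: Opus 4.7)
I will reduce \eqref{sha} to the sharp one-variable harmonic Schwarz--Pick inequality of \cite{kalvuo} in two steps: first move the base point to the origin by an involutive automorphism of the ball, then restrict to complex lines to pass from $\mathbf{B}_n$ to $\mathbf{D}$. Since $\mathbf{B}_n$ is simply connected I write $f=\re F$ for a holomorphic primitive $F:\mathbf{B}_n\to\mathbf{C}$; the quantity $|\nabla f(z)|$ is then naturally interpreted as $|F'(z)|$, which coincides both with the Euclidean gradient norm on $\mathbf{R}^{2n}$ and with the operator norm of the real differential $df(z):\mathbf{C}^n\to\mathbf{R}$.

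\emph{Step 1 (reduction to $z=0$).} Fix $a\in\mathbf{B}_n$ and set $g=f\circ\varphi_a$. Holomorphic precomposition preserves pluriharmonicity (since $F\circ\varphi_a$ is holomorphic), so $g:\mathbf{B}_n\to(-1,1)$ is pluriharmonic with $g(0)=f(a)$. Using the involutive identity $f=g\circ\varphi_a$ and the chain rule at $a$,
\[df(a)=dg(0)\circ\varphi_a'(a)=dg(0)\circ N_a.\]
Because $N_a$ is $\mathbf{C}$-linear, its real and complex operator norms agree, and Lemma~\ref{lemafor} gives $\|N_a\|=(1-|a|^2)^{-1}$; hence
\[|\nabla f(a)|=\|df(a)\|\le\|dg(0)\|\cdot\|N_a\|=\frac{|\nabla g(0)|}{1-|a|^2}.\]
It thus suffices to prove the base-point version $|\nabla g(0)|\le\frac{4}{\pi}(1-g(0)^2)$.

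\emph{Step 2 (slicing to the disk).} Write $g=\re G$ with $G$ holomorphic on $\mathbf{B}_n$. For each unit $\xi\in\mathbf{C}^n$ the slice $g_\xi(\zeta):=g(\zeta\xi)=\re G(\zeta\xi)$ is real harmonic on $\mathbf{D}$ with values in $(-1,1)$, and its holomorphic primitive is $G_\xi(\zeta)=G(\zeta\xi)$, whence $G_\xi'(0)=G'(0)\cdot\xi$. The sharp one-dimensional harmonic Schwarz--Pick inequality from \cite{kalvuo} at $\zeta=0$ gives
\[|G'(0)\cdot\xi|=|G_\xi'(0)|=|\nabla g_\xi(0)|\le\frac{4}{\pi}\bigl(1-g_\xi(0)^2\bigr)=\frac{4}{\pi}\bigl(1-g(0)^2\bigr).\]
Taking the supremum over unit $\xi\in\mathbf{C}^n$ yields $|\nabla g(0)|=|G'(0)|\le\frac{4}{\pi}(1-g(0)^2)$, completing Step 1 and proving \eqref{sha}. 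Sharpness is witnessed by $f(z)=u(z_1)$ with $u$ extremal for the one-dimensional inequality (the real part of a conformal map of $\mathbf{D}$ onto the strip $\{|\re w|<1\}$).

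\emph{Main obstacle.} The only genuine analytic content is the sharp constant $4/\pi$, which is imported as a black box from \cite{kalvuo}. Once granted, the remaining points are routine: pluriharmonicity is stable under holomorphic precomposition, $N_a$ is $\mathbf{C}$-linear so Lemma~\ref{lemafor} applies directly, and the complex gradient of $f$ restricts linearly under the slice map $\zeta\mapsto\zeta\xi$. The automorphism manoeuvre is exactly the one already used in the proof of Theorem~\ref{theo1}, so no new estimate on $\varphi_a$ is required.
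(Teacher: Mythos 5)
Your proof is correct, but it takes a genuinely different route from the paper's. You reduce the statement to the sharp one-variable harmonic Schwarz--Pick inequality of \cite{kalvuo}: first you normalize the base point with $\varphi_a$, using Lemma~\ref{lemafor} to control the factor $\|N_a\|=(1-|a|^2)^{-1}$ exactly as in the proof of Theorem~\ref{theo1}, and then you slice along complex lines through the origin; the supremum over directions is legitimate because $\sup_{|\xi|=1}|G'(0)\cdot\xi|=|G'(0)|=|\nabla g(0)|$, so the slices do capture the full gradient. The constant $4/\pi$ enters only as a citation. The paper instead works directly at an arbitrary point and never invokes \cite{kalvuo}: it forms the holomorphic completion $a=f+ih$ with values in the strip $\{w:|\re w|<1\}$, pulls back by the conformal map $\frac{2i}{\pi}\log\frac{1+z}{1-z}$ of the disk onto that strip to obtain a holomorphic $b:\mathbf{B}_n\to\mathbf{U}$, applies the $m=1$ Schwarz--Pick inequality \eqref{merkur1} to $b$, and closes the argument with the elementary estimate $|\cos t|\le 1-\frac{4}{\pi^2}t^2$ on $(-\pi/2,\pi/2)$. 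Thus the paper's proof is self-contained given Theorem~\ref{theo1} (it in effect reproves the one-dimensional result it extends), whereas yours is shorter and more modular and makes transparent that the $n$-dimensional statement is a formal consequence of the disk case via a M\"obius shift plus slicing; the price is that the only genuinely analytic ingredient, the sharp constant, is imported as a black box. Both arguments produce essentially the same sharpness example, a function of the first variable extremal for the disk inequality.
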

\begin{proof}[Proof of Theorem~\ref{te2}] Let  $h$ be the pluri-harmonic
conjugate of $f$, i.e. assume that $a=f+ih$ is a holomorphic mapping. Then $a$  maps the unit ball $\mathbf{B}_n $
into the vertical strip $S= \{w: -1< \Re w< 1 \}\,.$

Let 
$$ g(z) = \frac{2i}{\pi}\log \frac{1+z}{1-z}.$$ Then 
$g$ is a conformal mapping of  the unit disk $ \mathbf U$ onto the strip $S$. Hence $b(z) = g^{-1} (a(z))$ is a holomorphic mapping of the unit ball onto the unit disk $\mathbf{U}$. Then we have that 
$$a(z) =\frac{2i}{\pi}\log \frac{1+b(z)}{1-b(z)}.$$ By \eqref{merkur1} we have
$$|b'(z)|\le \frac{1-|b(z)|^2}{1-|z|^2}.$$ On the other hand $$a'(z) =
\frac{4i}{\pi}\frac{b'(z)}{1-b(z)^2}.$$  Since $a$ is holomorphic we obtain that $$|a'(z)|=\sqrt{\sum_{k=1}^n |a_{z_k}|^2}=\sqrt{\sum_{k=1}^nf_{x_k}^2+\sum_{i=k}^nf_{y_k}^2}$$ and $$|\nabla f|=|(f_{x_1},f_{y_1},\dots, f_{x_n},f_{y_n})|=\sqrt{\sum_{k=1}^nf_{x_k}^2+\sum_{k=1}^nf_{y_k}^2}=|a'(z)|.$$
We will find the best possible constant $C$ such that
$$|\nabla f(z)|\le C \frac{1-|f(z)|^2}{1-|z|^2}.$$
As
$$|a'(z)|\le \frac{4}{\pi}\frac{1-|b(z)|^2}{|1-b(z)^2|} \, \frac{1}{1-|z|^2}$$
it will be enough to find the best possible constant $C$ such that
$$\frac{4}{\pi}\frac{1-|b(z)|^2}{|1-b(z)^2|} \, \frac{1}{1-|z|^2}\le C
\frac{1-|\Re a(z)|^2}{1-|z|^2}$$ or what is the same
$$\frac{4\pi}{({\pi^2-4|\mathrm{arg} \frac{1+b}{1-b}|^2})}\frac{1-|b|^2}{|1-b^2|}\le C\,, |b|<1\,.$$
Let $\omega= \frac{1+b}{1-b} = r e^{it}$. Then $-\pi/2\le t\le
\pi/2$ and
$$  1-|b|^2 = \frac{4 r  \cos  t}{r^2+ 2r \cos t +1},$$ $$
|1-b^2| = \frac{4 r |e^{it} |}{r^2+ 2r \cos t +1} ,$$ and hence the
last inequality with the constant $C= 4/\pi$ follows from
$$\frac{|\cos t|}{1-\frac{4}{\pi^2}t^2}\le 1$$
which holds for $t \in (-\pi/2,\pi/2) \, .$ This yields \eqref{sha}.

To show that the inequality \eqref{sha} is sharp, take the pluri-harmonic
function $$f(z) =
\frac{2}{\pi}\mathrm{arctan}\frac{2y_1}{1-x_1^2-y_1^2}.$$ It is easy to
see that $$|\nabla
f(0)|=\frac{4}{\pi}=\frac{4}{\pi}\frac{1-|f(0)|^2}{1-0^2}.$$
\end{proof}

If $ds=\frac{2|dz|}{1-|z|^2}$ is the hyperbolic metric in the unit ball, denote by $d_h$ the corresponding distance function. Now from \eqref{merkur1} we infer
\begin{corollary}
If $f:\mathbf{B}_n\to \mathbf{B}_1$ is holomorphic, then  $$d_h(f(z),f(w))\le  d_h(z,w).$$
\end{corollary}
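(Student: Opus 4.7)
The plan is to deduce the distance inequality from the infinitesimal inequality \eqref{merkur1} by integrating along curves, which is the standard way to pass from a Schwarz--Pick inequality on derivatives to one on the hyperbolic distance.

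First I would recall that for any piecewise $C^1$ curve $\gamma:[0,1]\to \mathbf{B}_n$, the hyperbolic length is
\[
L_h(\gamma)=\int_0^1 \frac{2|\gamma'(t)|}{1-|\gamma(t)|^2}\,dt,
\]
and that $d_h(z,w)$ is defined as the infimum of $L_h(\gamma)$ over all such curves joining $z$ to $w$ in $\mathbf{B}_n$ (and analogously in $\mathbf{B}_1$).

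Next I would fix two points $z,w\in\mathbf{B}_n$, take an arbitrary piecewise $C^1$ curve $\gamma$ from $z$ to $w$ in $\mathbf{B}_n$, and consider the image curve $f\circ\gamma$ in $\mathbf{B}_1$. Using the chain rule $(f\circ\gamma)'(t)=f'(\gamma(t))\gamma'(t)$ together with the operator-norm bound $|f'(\gamma(t))\gamma'(t)|\le \|f'(\gamma(t))\|\cdot|\gamma'(t)|$, and applying \eqref{merkur1} pointwise to $f'(\gamma(t))$, I obtain
\[
\frac{2\,|(f\circ\gamma)'(t)|}{1-|f(\gamma(t))|^2}\le \frac{2\,\|f'(\gamma(t))\|\,|\gamma'(t)|}{1-|f(\gamma(t))|^2}\le \frac{2\,|\gamma'(t)|}{1-|\gamma(t)|^2}.
\]
Integrating over $[0,1]$ yields $L_h(f\circ\gamma)\le L_h(\gamma)$, and since $f\circ\gamma$ is an admissible curve from $f(z)$ to $f(w)$ in $\mathbf{B}_1$, we get $d_h(f(z),f(w))\le L_h(\gamma)$. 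Taking the infimum over $\gamma$ gives the desired inequality.

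There is essentially no obstacle here: the only point that deserves attention is to make sure the operator-norm estimate is the correct quantity appearing on the right-hand side of \eqref{merkur1}, which is indeed how $\|f'(z)\|$ was defined in the introduction. The $m=1$ case of Theorem~\ref{theo1} is exactly what makes the integrand on the right the hyperbolic arc-length element in $\mathbf{B}_1$, so the integration argument closes cleanly.
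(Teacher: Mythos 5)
Your proof is correct and is exactly the argument the paper intends: the corollary is stated with only the remark ``from \eqref{merkur1} we infer,'' and the standard step being elided is precisely your integration of the infinitesimal inequality along curves, followed by taking the infimum. Nothing further is needed.
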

For general $m$ we have 
\begin{corollary}
If $f:\mathbf{B}_n\to \mathbf{B}_m$ is holomorphic, then  $$\arcsin(|f(z)|)\le \mathrm{arctanh}(|z|).$$
\end{corollary}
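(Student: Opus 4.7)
The plan is to integrate the sharp gradient estimate \eqref{merkur} of Theorem~\ref{theo1} along a radial segment from $0$ to $z$, thus turning the pointwise bound on $\|f'\|$ into a pointwise bound on $|f|$. The statement implicitly assumes $f(0)=0$, as setting $z=0$ shows, so I take this as given throughout.

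Fix $z\in\mathbf{B}_n\setminus\{0\}$, set $\zeta=z/|z|$, and parametrize $\gamma(t)=t\zeta$ for $t\in[0,|z|]$. Put $F(t)=f(\gamma(t))$ and $\varphi(t)=|F(t)|$. The chain rule yields $F'(t)=f'(\gamma(t))\zeta$, whence
\begin{equation*}
|F'(t)|\le \|f'(\gamma(t))\|\cdot|\zeta|=\|f'(\gamma(t))\|.
\end{equation*}
On any subinterval where $F\neq 0$, the function $\varphi$ is smooth and $\varphi'(t)\le |F'(t)|$; combined with \eqref{merkur} evaluated at $\gamma(t)$, this gives the differential inequality
\begin{equation*}
\frac{\varphi'(t)}{\sqrt{1-\varphi(t)^2}}\le \frac{1}{1-t^2}.
\end{equation*}
Integrating from $0$ to $|z|$ and using the antiderivatives $\arcsin$ on the left and $\mathrm{arctanh}$ on the right, together with $\varphi(0)=|f(0)|=0$, gives exactly $\arcsin|f(z)|\le \mathrm{arctanh}(|z|)$.

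The only delicate point is that $\varphi$ is merely Lipschitz at the zeros of $F$, so the preceding calculation is not literally valid on all of $[0,|z|]$. I would dispatch this by one of two routine devices: either (i) observing that if $f$ is nonconstant along the ray then $F$ is a nonzero holomorphic map of one complex variable on a disk, so its zero set in $[0,|z|]$ is finite, and running the integration on each open interval of the complement while using continuity of $\varphi$ at the zeros; or (ii) smoothing by replacing $\varphi$ with $\sqrt{\varphi^2+\varepsilon^2}$, deducing the analogous inequality, and passing to the limit $\varepsilon\to 0^+$. (The case of $f$ identically zero along the ray is trivial.) No serious obstacle arises beyond this regularity issue, since Theorem~\ref{theo1} already supplies the essential analytic content.
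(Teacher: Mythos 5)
The paper states this corollary with no proof at all, so there is nothing to compare against line by line; your radial integration of \eqref{merkur} is clearly the intended derivation (it is the exact analogue of the preceding corollary, which integrates \eqref{merkur1} to get the hyperbolic contraction), and your argument is correct, including the treatment of the non-smoothness of $t\mapsto|F(t)|$ at zeros of $F$. You are also right to flag that the statement is false as written without the unstated hypothesis $f(0)=0$ (a nonzero constant map violates it at $z=0$), and with that hypothesis your differential inequality integrates to exactly the claimed bound. One small addition worth making explicit: \eqref{merkur} is asserted only for $m\ge 2$, but for $m=1$ it follows from \eqref{merkur1} since $1-|f|^2\le\sqrt{1-|f|^2}$, so your argument covers all $m$.
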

By using scaling argument we obtain
\begin{corollary}
If $f$ is a holomorphic mapping of the unit ball $\mathbf{B}_n\subset\mathbf{C}^n$ into $\mathbf{C}^m$, then
\begin{equation}\label{hane}\| f'(z)\|\le \left\{
                                            \begin{array}{ll}
                                              \frac{\sqrt{\|f\|^2-|f(z)|^2}}{1-|z|^2}, \ \ z\in \mathbf{B}_n, & \hbox{if $m\ge 2$;} \\
                                              \frac{\|f\|^2-|f(z)|^2}{\|f\|(1-|z|^2)}, & \hbox{if $m=1$.}
                                            \end{array}
                                          \right.
\end{equation} Here $\|f\|:=\sup_z|f(z)|.$
\end{corollary}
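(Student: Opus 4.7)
The plan is to reduce the statement to Theorem~\ref{theo1} by the scaling $g:=f/\|f\|$. First I would deal with degenerate cases separately: if $\|f\|=0$ then $f\equiv 0$ and both sides of \eqref{hane} vanish, while if $\|f\|=+\infty$ the right-hand side is $+\infty$ and the inequality is vacuous. So I would assume $0<\|f\|<\infty$ and work with $g$, which is a holomorphic map from $\mathbf{B}_n$ into $\overline{\mathbf{B}_m}$ satisfying $\sup|g|=1$.

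Next I would check that either $g$ is constant (in which case $f$ is constant, $f'\equiv 0$, and \eqref{hane} is trivial) or $g(\mathbf{B}_n)\subset \mathbf{B}_m$. This is a maximum modulus argument: $|g|^2$ is plurisubharmonic, so if it attained its supremum $1$ at an interior point, $g$ would be constant. In the non-degenerate case, Theorem~\ref{theo1} applies to $g$ and yields
$$\|g'(z)\|\le\frac{\sqrt{1-|g(z)|^2}}{1-|z|^2}\ \text{ for }m\ge 2,\qquad \|g'(z)\|\le\frac{1-|g(z)|^2}{1-|z|^2}\ \text{ for }m=1.$$

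Finally I would substitute $g'(z)=f'(z)/\|f\|$ and $|g(z)|^2=|f(z)|^2/\|f\|^2$ into each of these bounds and multiply through by $\|f\|$. For $m\ge 2$ this produces the numerator $\sqrt{\|f\|^2-|f(z)|^2}$ (after pulling one factor of $\|f\|$ out of the square root and cancelling), and for $m=1$ it produces the numerator $(\|f\|^2-|f(z)|^2)/\|f\|$, which are exactly the two cases of \eqref{hane}.

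There is essentially no obstacle; the only point requiring care is that the normalized map $g$ a priori only lands in the closed ball, which is precisely what the maximum modulus step handles. Everything else is routine algebra.
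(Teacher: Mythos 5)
Your proposal is correct and is exactly the scaling argument the paper invokes (the paper gives no details beyond the phrase ``by using scaling argument''): normalize $g=f/\|f\|$, apply Theorem~\ref{theo1}, and undo the scaling, with the algebra checking out in both cases of \eqref{hane}. Your extra care with the degenerate cases and with the maximum modulus step (to ensure $g$ maps into the open ball) is a sensible completion of what the paper leaves implicit.
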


In order to state a new corollary of the main result, recall the definition of  Bloch space $\mathcal{B}$ of holomorphic  mappings of the unit ball $\mathbf{B}_n$ into $\mathbf{C}^m$. We say that $f\in \mathcal{B}$ provided its seminorm satisfies  $\|f\|_{\mathcal{B}}=\sup_{|z|<1}(1-|z|^2) |f'(z)|<\infty$. Let $\mathcal{B}_1$ be the unit ball of $\mathcal{B}$. Let  $\mathfrak{B}$ be the set of bounded holomorphic mappings between $B_n$ and $\mathbf{C}^m$, i.e. of mappings satisfying the inequality $\|f\|=\sup_z|f(z)|<\infty$.
\begin{corollary}
The inclusion operator $\mathcal{I}:f\mapsto f$ between $\mathfrak{B}$ and $\mathcal{B}$ has norm equal to $1$.
\end{corollary}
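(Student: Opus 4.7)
The plan is to deduce both the upper bound $\|\mathcal{I}\|\le 1$ and the matching lower bound $\|\mathcal{I}\|\ge 1$ from material already established.

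For the upper bound, I would quote the previous corollary (inequality \eqref{hane}) directly. In the case $m\ge 2$, it gives
\[
(1-|z|^2)\,\|f'(z)\|\le \sqrt{\|f\|^2-|f(z)|^2}\le \|f\|,
\]
and in the case $m=1$ it gives
\[
(1-|z|^2)\,\|f'(z)\|\le \frac{\|f\|^2-|f(z)|^2}{\|f\|}\le \|f\|.
\]
Taking the supremum over $z\in\mathbf{B}_n$ therefore yields $\|f\|_{\mathcal{B}}\le \|f\|$ for every $f\in\mathfrak{B}$, so that $\|\mathcal{I}\|\le 1$.

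For the lower bound, I would exhibit an explicit $f\in\mathfrak{B}$ with $\|f\|_{\mathcal{B}}=\|f\|$. A natural choice is the coordinate mapping $f(z)=(z_1,0,\dots,0)$ (or simply $f(z)=z_1$ when $m=1$). Then $|f(z)|=|z_1|\le|z|<1$ and the supremum is $1$, hence $\|f\|=1$. Since $f'(z)$ is the projection onto the first coordinate, $\|f'(z)\|=1$ for every $z$, and therefore
\[
\|f\|_{\mathcal{B}}=\sup_{|z|<1}(1-|z|^2)\,\|f'(z)\|=1,
\]
the supremum being attained at $z=0$. This forces $\|\mathcal{I}\|\ge \|f\|_{\mathcal{B}}/\|f\|=1$.

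There is really no hard step here; the statement is just a clean repackaging of the sharp estimate \eqref{hane}. The only points to be careful about are that the two cases $m=1$ and $m\ge 2$ both give the bound $\|f\|$ after a trivial simplification, and that the coordinate projection lies in $\mathfrak{B}$ and achieves equality, which ensures the operator norm equals $1$ rather than just being bounded by it.
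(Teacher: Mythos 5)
Your argument is correct and follows essentially the same route as the paper: the upper bound is read off from inequality \eqref{hane}, and the lower bound comes from the coordinate projection $f_0(z,w)=(z,0)$, which is exactly the extremal example the paper uses. No gaps.
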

\begin{proof}
It is clear from \eqref{hane} that $|\mathcal{I}|\le 1$. Prove the equality statement. Assume as we may that $n=m=2$. Let $f_0(z,w)=(  z,0  )$. Then $$\|f_0\|_{\mathcal{B}}=\sup_{|z|^2+|w|^2\le 1} (1-|z|^2-|w|^2)\|f'_0(z,w)\|=1,$$ and $$\|f_0\|=\sup_{|z|^2+|w|^2\le 1}\sqrt{|z|^2 }=1.$$ This finishes the proof.
\end{proof}
\begin{remark}
The inclusion operator is a restriction of a Bergman projection $\mathcal{P}_\alpha$, $\alpha>-1$, between $L^\infty(B_n)$ and $\mathcal{B}$ whose norm is greater than $1$. See corresponding results for the plane \cite{perala} and for the space \cite{kalmar}.
\end{remark}

\subsection{Acknowledgement} I would like to thank Dr Marijan Markovic for some remarks concerning pluriharmonic functions.


\begin{thebibliography}{1}

\bibitem{kalmar}
D. Kalaj, M. Markovic: \emph{Norm of the Bergman projection}, Mathematica Scandinavica, vol. 115, no. 1, pp. 143-160, 2014.

\bibitem{kalvuo}
D. Kalaj, M. Vuorinen,\emph{On harmonic functions and the Schwarz lemma.}
Proc. Am. Math. Soc. 140, No. 1, 161-165 (2012).
\bibitem{Pavlovic}
M. Pavlovi\'c:  \emph{A Schwarz lemma for the modulus of a vector-valued analytic function.} Proc Amer Math Soc,
2011, 139(3): 969-973
\bibitem{perala}
A. Per\"al\"a, \emph{Bloch space and the norm of the Bergman projection.}
Ann. Acad. Sci. Fenn., Math. 38, No. 2, 849-853 (2013).
\bibitem{rudin}
W. Rudin: \emph{Function theory of the unit ball in $\mathbf{C}^n$}, Springer, 1980.


\end{thebibliography}
\end{document}